  \crefname{theorem}{Theorem}{Theorems}
  \crefname{thm}{Theorem}{Theorems}
  \crefname{lemma}{Lemma}{Lemmas}
  \crefname{lem}{Lemma}{Lemmas}
  \crefname{remark}{Remark}{Remarks}
  \crefname{prop}{Proposition}{Propositions}
  \crefname{proposition}{Proposition}{Propositions}
\crefname{notation}{Notation}{Notations}
\crefname{claim}{Claim}{Claims}
  \crefname{defn}{Definition}{Definitions}
  \crefname{corollary}{Corollary}{Corollaries}
  \crefname{section}{Section}{Sections}
  \crefname{figure}{Figure}{Figures}
  \crefname{exercise}{Exercise}{Exercises}
    \crefname{assumption}{Assumption}{Assumptions}
\newtheorem{thm}{Theorem}%[section]
\newtheorem{lemma}[thm]{Lemma}
\newtheorem{proposition}[thm]{Proposition}
\theoremstyle{definition}
\newtheorem{remark}[thm]{Remark}
\def \cal {\circlearrowleft}
\def\cD{\mathcal{D}}
\def\cC{\mathcal{C}}
\def\cA{\mathcal{A}}
\def\P{\mathbb{P}}
\def\R{\mathbb{R}}
\def\Z{\mathbb{Z}}
\def\R{\mathbb{R}}
\def\L{\mathbb{L}}
\def\1{\mathbf{1}}
\def  \p- {p\textunderscore}
\date{}
\title{A short proof of the discontinuity of phase transition in the planar random-cluster model with $q>4$}
\date{\today}
\author{
	Gourab Ray
	\thanks{University of Victoria, Department of Mathematics, Victoria, BC, V8W 2Y2, Canada. Supported in part by NSERC 50311-57400 and University of Victoria start-up 10000-27458.}
	\and
	Yinon Spinka
	\thanks{University of British Columbia,
    Department of Mathematics,
 	Vancouver, BC, V6T 1Z2, Canada.}
}
\begin{document}

\maketitle
\abstract{The goal of this paper is to provide a short proof of the discontinuity of phase transition for the random-cluster model on the square lattice with parameter $q>4$. This result was recently shown in \cite{duminil2016discontinuity} via the so-called \emph{Bethe ansatz} for the six-vertex model. Our proof also exploits the connection to the six-vertex model, but does not rely on the Bethe ansatz. Our argument is soft and only uses very basic properties of the random-cluster model (for example, we do not need the Russo--Seymour--Welsh theory developed recently in \cite{duminil2017continuity}).}

\section{Main results}

The random-cluster model is a well-known and, by now, also a well-studied dependent percolation model. Suppose we are given a finite graph $G$ and two parameters $p \in (0,1)$ and $q>0$, called the edge weight and cluster weight. The random-cluster model is a probability measure on $\{0,1\}^{E(G)}$ which assigns to a configuration $\omega$ a probability proportional to
\[ p^{o(\omega)}(1-p)^{c(\omega)} q^{k(\omega)},\]
where $o(\omega)$ is the number of open edges (edges with value 1), $c(\omega) = |E(G)| - o(\omega)$ is the number of closed edges, and $k(\omega)$ is the number of \emph{vertex clusters} in $\omega$. 

In this paper, we are concerned with the random-cluster model on the square lattice with parameter $q \ge 1$. As this is an infinite graph, one must take an appropriate limit in the definition above. The random-cluster model satisfies a monotonicity (FKG) property that gives rise two natural limits called the \emph{free random-cluster measure} and the \emph{wired random-cluster measure}. We denote these measures by $\P^{\text{f}}_{p,q}$ and $\P^{\text{w}}_{p,q}$, respectively. Both measures are translation-invariant probability measures on $\{0,1\}^{E(\Z^2)}$ which are extremal in a certain sense.

An important quantity in this model is the probability that the origin belongs to an infinite cluster. Let $\theta^{\text{f}}(p,q)$ and $\theta^{\text{w}}(p,q)$ denote these probabilities under $\P^{\text{f}}_{p,q}$ and $\P^{\text{w}}_{p,q}$, respectively.
It is well-known that the random-cluster model undergoes a phase transition as $p$ varies in the sense that, for any $q \ge 1$ there exists a critical threshold $p_c=p_c(q) \in (0,1)$ such that $\theta^{\text{f}}(p,q)$ and $\theta^{\text{w}}(p,q)$ are both 0 for all $p<p_c$ and are both positive for all $p>p_c$. In fact, in the case of the square lattice it has been shown~\cite{DCB_selfdual} that the critical threshold is $p_c=\sqrt{q}/(1+\sqrt{q})$ and it follows from~\cite[Theorem~6.17]{grimmett2006random} that $\P^{\text{f}}_{p,q}=\P^{\text{w}}_{p,q}$ for every $p \neq p_c$.

The behavior at the critical parameter $p_c$ is of great interest. One important question is whether the phase transition is \emph{continuous} or \emph{discontinuous}, meaning here whether $\theta^{\text{w}}(p,q)$ (equivalently, $\theta^{\text{f}}(p,q)$) is continuous or discontinuous as a function of $p$. It is well-known~\cite[Theorem~5.16]{grimmett2006random} that this function is continuous at all points $p \neq p_c$.
The problem then boils down to understanding the behavior at criticality.
Baxter~\cite{baxter1978solvable} conjectured that, on the square lattice $\Z^2$, the phase transition is continuous for $1 \le q \le 4$ and is discontinuous for $q>4$. This conjecture was recently verified in a combination of two beautiful papers: the regime $1 \le q \le 4$ was handled in~\cite{duminil2017continuity} and the case $q>4$ in~\cite{duminil2016discontinuity}. 
The existing proof of discontinuity for $q>4$ relies on an analysis of the so-called \emph{Bethe ansatz} aimed at computing the eigenvalues of a transfer matrix for the six-vertex model.
In this paper, we provide a short probabilistic proof of this result.

\begin{thm}\label{thm:main2}
The random-cluster model on $\Z^2$ with parameter $q>4$ undergoes a discontinuous phase transition in the sense that $\theta^{\text{f}}(p_c,q)=0$ and $\theta^{\text{w}}(p_c,q)>0$.
\end{thm}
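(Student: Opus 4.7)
The plan is to use the classical correspondence between the critical random-cluster model on $\Z^2$ and a six-vertex model on the medial lattice, but to replace the Bethe ansatz analysis of~\cite{duminil2016discontinuity} with a soft probabilistic argument about the associated height function. Concretely, at the self-dual point $p_c = \sqrt{q}/(1+\sqrt{q})$ the loops separating primal and dual clusters in the wired (or free) random-cluster measure can, after the Baxter--Kelland--Wu random orientation, be identified with configurations of a six-vertex model with weights $a = b = 1$ and $c = \sqrt{q}$. For $q > 4$ this gives $\Delta = 1 - q/2 < -1$, the antiferroelectric regime, where we expect the $\Z$-valued height function $h$ associated to the six-vertex configuration to be localized. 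Since the loops of the random-cluster representation coincide with the level lines of $h$, strong enough localization of $h$ will control the cluster geometry.

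The key analytic input is a \emph{localization estimate}: for $q > 4$, the variance of $h(0)$ under the six-vertex measure on $[-n,n]^2$ with flat boundary condition $h \equiv 0$ is bounded uniformly in $n$. Modulo this estimate, the two halves of the theorem follow quickly. Under $\P^{\text{w}}_{p_c,q}$, the wired boundary condition pins $h$ to $0$ on the outer face; localization then gives $\P^{\text{w}}_{p_c,q}(h(0) = 0) \ge c_0 > 0$ uniformly in the domain, which via the loop-to-cluster dictionary means that $0$ is connected to the wired boundary with uniform positive probability, hence $\theta^{\text{w}}(p_c,q) > 0$. For the other half, the existence of an infinite primal cluster under $\P^{\text{w}}_{p_c,q}$ rules out, by planar duality and the standard Burton--Keane / Zhang argument that two infinite clusters cannot coexist, the existence of an infinite dual cluster under the same measure; translating back through self-duality of $p_c$ then yields $\theta^{\text{f}}(p_c,q) = 0$.

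The main obstacle is, unsurprisingly, proving the localization estimate for $h$ without resorting to exact integrability. My proposed approach is a Peierls-type argument tailored to the antiferroelectric regime: all vertex types carry weight $1$ except the two $c$-type vertices which carry weight $\sqrt{q} > 2$. A configuration with $h(0) = k \ne 0$ must contain at least $|k|$ nested loops around $0$, and each such loop forces an excess of non-$c$ vertices relative to the staircase ground state, producing a weight penalty of order $q^{-\gamma\cdot\text{length}}$. Balancing this against the combinatorial entropy $e^{O(\text{length})}$ of possible contours should yield the uniform bound, at least once the constant $\gamma$ is made explicit from the local structure of the six-vertex vertices. Making this accounting rigorous despite the long-range dependence and the orientation constraints of the six-vertex rule is the technical heart of the proof; once it is in place, the rest is bookkeeping via planar duality and the FKG inequality for the random-cluster model.
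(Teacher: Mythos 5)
Your proposal and the paper's proof share the same starting point (the BKW correspondence between the critical random-cluster model and a six-vertex model with antiferroelectric weights) but diverge completely afterward, and the divergence is exactly where your argument has a gap.

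The paper does \emph{not} prove a localization estimate for the six-vertex height function. Instead it argues by contradiction. Assuming continuity, one has $\P^{\text{f}}_{p_c,q}=\P^{\text{w}}_{p_c,q}$, and both primal and dual clusters are a.s.\ finite. The BKW loops around the origin are then a.s.\ infinitely many, and orienting each with bias $e^{\pm\lambda}/(e^\lambda+e^{-\lambda})$ makes $h(\cC_n)$ (the height along the nested sequence of clusters around $o$) a biased random walk that drifts to $+\infty$ if $\lambda>0$ and to $-\infty$ if $\lambda<0$. The key observation (Theorem~\ref{thm:c}) is that, up to a boundary weight $q_b=e^{\pm\lambda}\sqrt{q}$ that washes out in the infinite-volume limit precisely because of the assumed continuity, the induced six-vertex law is invariant under $\lambda\mapsto-\lambda$. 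A measure cannot have $h$ drift a.s.\ both to $+\infty$ and to $-\infty$, so continuity is impossible. This is a purely soft symmetry argument, requiring no quantitative control on $h$ whatsoever.

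Your approach, by contrast, is a direct proof hinging on the \emph{localization estimate} that $\var h(0)$ is bounded uniformly in the domain. That estimate is essentially equivalent to the discontinuity of the six-vertex phase transition in the antiferroelectric regime, i.e.\ to the theorem being proved. Your proposed Peierls-type contour argument is precisely the route taken in the earlier literature cited in the paper (\cite{LMR86,LMMSS91,KS82,DC_disc}), and it is known to work only when $q$ is large (the best soft Peierls/Pirogov--Sinai bounds reach $q\gtrsim 25$, not $q>4$): as $q\downarrow 4$ the per-step energetic penalty vanishes while the contour entropy does not, so the naive energy-versus-entropy balance fails. You acknowledge the difficulty but treat it as a technicality; it is in fact the entire obstacle that the Bethe ansatz (in \cite{duminil2016discontinuity}) and the symmetry trick (in this paper) were designed to circumvent. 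There is also a secondary gap: boundedness of $\var h(0)$ alone does not give $\theta^{\text{w}}(p_c,q)>0$; you would need tightness of the \emph{number of nested loops} around the origin, not merely of $h(0)$, since $h(0)$ is a signed sum over those loops and can be small even when there are many of them. (Minor aside: the correct six-vertex weight from the BKW map is $c=\sqrt{2+\sqrt{q}}$, not $c=\sqrt{q}$; both happen to put you in $\Delta<-1$ for $q>4$, but the identification you wrote is not the one that makes the coupling measure-preserving.) The deduction of $\theta^{\text{f}}(p_c,q)=0$ from $\theta^{\text{w}}(p_c,q)>0$ via planar duality and Zhang's argument is fine and standard.
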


We mention that the proof in~\cite{duminil2016discontinuity} yields a precise expression for the exponential rate of decay of the probability (under the free random-cluster measure) that the origin is connected to a far away point. Our proof relies on softer arguments and does not directly yield any information on the above rate of decay. However, by the dichotomy result in~\cite[Theorem~3]{duminil2017continuity}, it follows that the decay rate must be exponential (though our proof does not rely on the Russo--Seymour--Welsh theory developed in~\cite{duminil2017continuity}).
In addition, the soft nature of our arguments provides hope that one can apply them to other planar graphs.

Let us also point out that partial results in various forms were obtained before the breakthrough of~\cite{duminil2016discontinuity}. For example, a proof for $q \ge 25.72$ was obtained in  \cite{LMR86,LMMSS91,KS82} using Pirogov--Sinai theory and entropy techniques. Later, Duminil--Copin provided a softer argument~\cite{DC_disc} which worked for $q \ge 256$. Duminil--Copin also mentions in \cite{DC_disc} that his approach should yield the result for $q \ge 82$.

We also mention that \cref{thm:main2} implies that the critical planar $q$-state Potts model with (integer) $q \ge 5$ also undergoes a discontinuous phase transition.

\bigskip

Our proof goes through the Baxter--Kelland--Wu (BKW) coupling~\cite{BKW76} between the planar random-cluster model and the \emph{six-vertex model}.
A configuration in the six-vertex model is an assignment of arrows to the edges of $\Z^2$ (i.e., an orientation of the edges) satisfying the \emph{ice rule}: at any vertex of $\Z^2$, there are precisely two incoming and two outgoing arrows (see \cref{fig:spin}). The six-vertex model is a measure on such configurations. In order to keep the introduction minimal, we do not define this measure here (see \cref{sec:six} for the definition), but rather isolate the property of the BKW coupling that we need for the proof of \cref{thm:main2}.

We will henceforth view the random-cluster model as living on the edges of the even sublattice of $(\Z^2)^*$ (the dual of $\Z^2$), which we denote by $\L$ (so $\L$ is a rotated square lattice which can be thought of as $\sqrt{2}e^{i \pi/4} (\Z^2+(1/2,1/2))$). We similarly let $\L^*$ denote the odd sublattice of $(\Z^2)^*$ (which is the dual of $\L$). We will refer to $\L$ as the primal lattice and to $\L^*$ as the dual lattice. For an edge $e \in E(\L)$, we denote its dual edge by $e^* \in E(\L^*)$.
For a percolation configuration $\omega \in \{0,1\}^{E(\L)}$, we define the dual configuration $\omega^* \in \{0,1\}^{E(\L^*)}$ by $\omega^*_{e^*} := 1-\omega_e$.
By duality of the random-cluster model (see e.g. \cite{grimmett2006random}), if $\omega \in \{0,1\}^{E(\L)}$ is sampled from the free critical random-cluster measure on $\L$, then its dual $\omega^*$ is the wired critical random-cluster measure on $\L^*$.

\begin{figure}
\centering
\includegraphics[scale = 1.5]{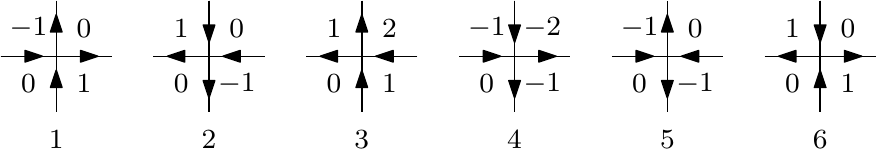}
\caption{The six possible types of arrow configurations satisfying the ice rule and the corresponding height function.}\label{fig:spin}
\end{figure}

Our proof of \cref{thm:main2} is a proof by contradiction.
Suppose that the random-cluster model undergoes a continuous phase transition for some $q>4$. A standard implication of this is that (and actually is equivalent to) the free and wired random-cluster measures coincide at criticality, i.e., $\P^{\text{f}}_{p_c,q}=\P^{\text{w}}_{p_c,q}$, and this measure assigns probability 0 to the event that there exists an infinite cluster in either the primal or dual configuration. Let $\omega$ be sampled from this measure. Consider the loops separating primal and dual clusters (which live on the medial lattice $\Z^2$). Independently orient each such loop randomly, either clockwise or anticlockwise with probabilities $e^{-\lambda} / (e^{\lambda}+e^{-\lambda})$ and $e^{\lambda} / (e^{\lambda}+e^{-\lambda})$, respectively. Since the loops are ``fully packed'' in the sense that every edge of $\Z^2$ is contained in some loop, by forgetting the loop structure (but not the orientation), we obtain a six-vertex configuration. Denote the law of this configuration by $\mu_{q,\lambda}$. See \cref{fig:fk-loops-height} for an illustration of these objects.

\begin{thm}\label{thm:c}
Let $q>4$ and suppose that the random-cluster model with parameter~$q$ undergoes a continuous phase transition. Let $\lambda$ be the unique positive solution to the equation $e^{\lambda} + e^{-\lambda} = \sqrt{q}$. Then $\mu_{q,\lambda} = \mu_{q,-\lambda}$.
\end{thm}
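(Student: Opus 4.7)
The plan is to realize $\mu_{q,\lambda}$ as a six-vertex Gibbs measure and show that its local vertex weights are invariant under the arrow-reversal map $R \colon \eta \mapsto \eta^R$ that flips every arrow. Since reversing every arrow reverses the orientation of every loop, it sends iid orientations with bias $\lambda$ to iid orientations with bias $-\lambda$; hence $R_* \mu_{q,\lambda} = \mu_{q,-\lambda}$, and the claim $\mu_{q,\lambda} = \mu_{q,-\lambda}$ is equivalent to the arrow-reversal invariance of $\mu_{q,\lambda}$.

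Under the continuous phase transition hypothesis, $\P^{\text{f}}_{p_c,q} = \P^{\text{w}}_{p_c,q}$ and there is almost surely no infinite primal or dual cluster, so every loop is finite and $\mu_{q,\lambda}$ is well-defined. In a finite volume I would carry out the Baxter--Kelland--Wu (BKW) computation: given a six-vertex configuration $\eta$, at each medial vertex the local arrow configuration either uniquely determines the value $\omega_e$ of the underlying random-cluster configuration (four of the six ice-rule types) or leaves both values consistent (the two ``ambiguous'' types, where the two loops passing through the vertex both turn in the same sense). Summing the joint weight over compatible $(\omega, \text{orientations})$ pairs, and using the identity $\sqrt{q} = e^{\lambda} + e^{-\lambda}$ together with the planar Euler formula $L(\omega) = k(\omega) + k(\omega^*) - 1$ to convert the cluster factor $q^{k(\omega)}$ into per-loop factors, the marginal weight of $\eta$ factorises into a product of local vertex weights $W(\eta_v)$. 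A direct check then shows that $W(\eta_v) = W(\eta_v^R)$ at each vertex, so the finite-volume measure is invariant under arrow reversal.

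Passing to the infinite volume by a standard weak-compactness argument, and using the finiteness of all loops to ensure that boundary effects vanish in the limit, yields $\mu_{q,\lambda} = \mu_{q,-\lambda}$.

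The main obstacle is the verification of $W(\eta_v) = W(\eta_v^R)$ at the two ambiguous vertex types, which are interchanged by arrow reversal. This identity reduces to a careful local computation in which the cluster factor $q^{k(\omega)}$ must be correctly accounted for via the BKW identity: at such a vertex, flipping $\omega_e$ simultaneously changes $k(\omega)$ and the number of loops $L(\omega)$ in a coordinated way that is controlled by the Euler identity. The continuous phase transition hypothesis is essential both to make $\mu_{q,\lambda}$ well-defined and to ensure that the finite-volume factorisation survives the infinite-volume limit.
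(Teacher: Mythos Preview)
Your reduction to arrow-reversal invariance of $\mu_{q,\lambda}$ is correct, and the observation that the bulk six-vertex weights $W(\eta_v)$ coming out of the BKW computation satisfy $W(\eta_v)=W(\eta_v^R)$ is correct as well --- it is exactly the statement that $c=e^{\lambda/2}+e^{-\lambda/2}$ depends only on $|\lambda|$, which is also what the paper exploits (its \cref{prop:split2}). The gap is in the sentence ``so the finite-volume measure is invariant under arrow reversal.'' This is false: the BKW factorisation into local vertex weights only holds once a boundary condition is imposed, and that boundary condition is \emph{not} $R$-invariant. Concretely, in the paper's setup the six-vertex boundary is forced to be an anticlockwise cycle; applying $R$ turns it into a clockwise cycle, so $R$ maps $\P^{6v,\cal}_{\Lambda,c}$ to $\P^{6v,\ca}_{\Lambda,c}$, not to itself. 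Equivalently, on the random-cluster side the clean factorisation requires the boundary cluster weight $q_b=e^{\lambda}\sqrt{q}$, and the sign of $\lambda$ genuinely changes $q_b$. If instead you take free or wired random-cluster boundary conditions and orient \emph{all} loops i.i.d., the marginal on arrows does \emph{not} factorise into local vertex weights, so your direct check does not apply.

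Thus the entire content of the theorem lies in the step you describe as ``a standard weak-compactness argument \dots using the finiteness of all loops to ensure that boundary effects vanish.'' Finiteness of loops alone is not enough: you must show that the two finite-volume measures with $q_b=e^{\lambda}\sqrt{q}$ and $q_b=e^{-\lambda}\sqrt{q}$ (equivalently, the anticlockwise and clockwise six-vertex boundary conditions) have the same infinite-volume limit. The paper does this by a monotonicity argument (\cref{lem:q_b}): both boundary weights lie in $[1,q]$, hence are stochastically sandwiched between wired and free, and the continuous phase transition hypothesis collapses that sandwich. Your proposal needs an argument of this kind; without it, the ``boundary effects vanish'' step is precisely the theorem you are trying to prove.
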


The proof of \cref{thm:c} relies on the observation that the BKW coupling is symmetric under the map $\lambda \mapsto -\lambda$ up to a boundary effect for the random-cluster model, which vanishes in the infinite-volume limit under the assumption of a continuous phase transition. We note that \cref{thm:c} remains true in the limiting case $q=4$ for the simple reason that $\lambda=0$. Indeed, when $q=4$ the phase transition is continuous and a six-vertex configuration sampled from $\mu_{4,0}$ is obtained by orienting the loops with no bias towards clockwise or anticlockwise orientation.

The proof of \cref{thm:c} is given in \cref{sec:BKW-and-proof}. Given \cref{thm:c}, the proof of \cref{thm:main2} is rather short and is given in the next section.

\medskip
\noindent\textbf{Acknowledgments.}
We are grateful to Alexander Glazman and Ron Peled who made available to us a draft of their paper~\cite{glazmanpeled2019}. We also thank them and Hugo Duminil-Copin for several useful comments on an earlier draft of this paper.

\section{Deducing Theorem~\ref{thm:main2} from Theorem~\ref{thm:c}}
\label{sec:thm-proof}

Fix $q > 4$. Throughout the proof, we assume that the phase transition in the random-cluster model with parameter $q$ is continuous in the sense that $\theta^{\text{w}}(p_c,q)=0$. It is standard~\cite[Theorem~5.33]{grimmett2006random} that this implies that the wired and free critical random-cluster measures coincide, i.e., $\P^{\text{w}}_{p_c,q}=\P^{\text{f}}_{p_c,q}$.
Our strategy is to show that a certain height function associated with a sample from $\mu_{q,\lambda}$ has a drift, which is positive if $\lambda>0$ and negative if $\lambda<0$. On the other hand, since \cref{thm:c} implies that $\mu_{q,\lambda}=\mu_{q,-\lambda}$ for some $\lambda>0$, this leads to a contradiction, showing that our assumption of continuity of phase transition must be false.

Fix $\lambda \neq 0$ and sample $\sigma$ from $\mu_{q,\lambda}$.
We start by defining the height function.
Recall that $\sigma$ satisfies the ice rule: at any vertex of $\Z^2$, there are two incoming and two outgoing edges.
We can think of the orientation of the edges of $\Z^2$ as a gradient of a function $h$ defined on the faces of $\Z^2$ (see \cref{fig:spin}). Precisely, fix $o \in \L$ and define $h \colon (\Z^2)^* \to \Z$ by setting $h(o):=0$ and defining $h(v)$ for any other $v \in (\Z^2)^*$ as follows. For every directed edge $e$ in $(\Z^2)^*$, associate a variable $H_e \in \{1,-1\}$ by letting $H_e=1$ if and only if $e$ crosses the oriented edge $e^*$ (where the orientation is determined by $\sigma$) from its left to its right.

Now let $\gamma$ be a path in $(\Z^2)^*$ from $o$ to $v$ and set $h(v) := \sum_{e \in \gamma} H_e$. To show that $h$ is well-defined, we must show that any two such paths yield the same value for $h(v)$. This follows from the fact that, by the ice rule, $H_{e_1}+H_{e_2}+H_{e_3}+H_{e_4}=0$ for every cycle $(e_1,e_2,e_3,e_4)$ in $(\Z^2)^*$. This shows that $h$ is a well-defined function of $\sigma$ and that this function satisfies $h(u)-h(v) \in \{1,-1\}$ for any adjacent $u,v \in (\Z^2)^*$.

Let us explain another way to obtain the height function.
Let $\omega$ be sampled from the unique critical random-cluster measure. Recall that $\sigma$ is obtained from $\omega$ by first randomly orienting the loops formed by interfaces between primal and dual clusters to obtain an oriented loop configuration $\vec\omega$, and then forgetting the loop structure (remembering only the arrows on the edges) to obtain $\sigma$. On the one hand, $h$ is defined through $\sigma$ as above. On the other hand, the same $h$ can also be defined directly through $\vec\omega$ as follows (see \cref{fig:fk-loops-height}). For every loop $L$, associate a variable $\xi_L \in \{1,-1\}$ by letting $\xi_L=1$ if and only if $L$ is oriented anticlockwise in $\omega^{\cal}$. Then the value of $h(v)$ is given by $\sum_L \xi_L - \sum_{L'} \xi_{L'}$, where the first sum is over the set of loops $L$ that surround $o$ but not $v$, and the second sum is over the set of loops $L'$ that surround $v$ but not $o$.

\begin{figure}
\centering
\includegraphics[scale = 0.5]{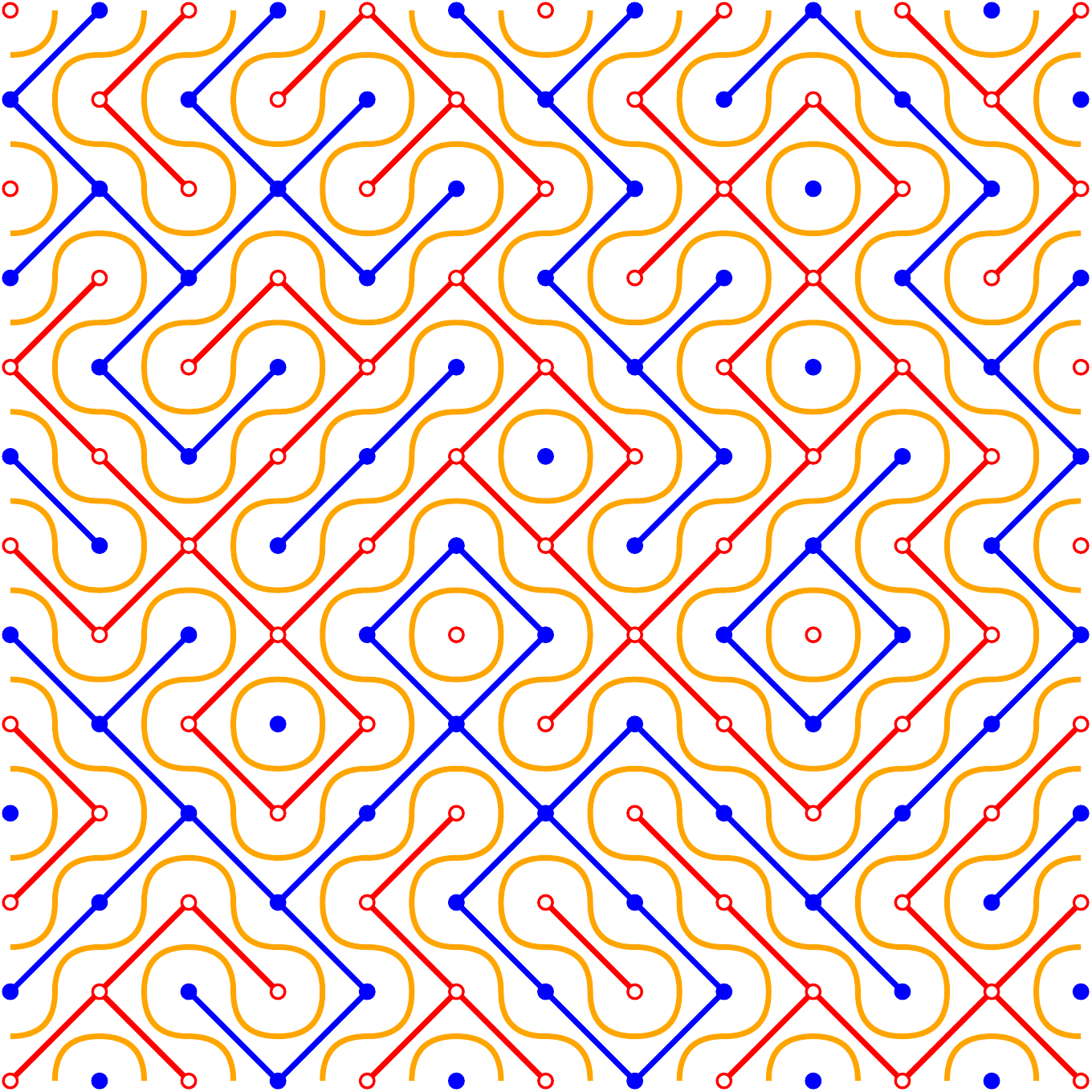}\qquad
\includegraphics[scale = 0.5]{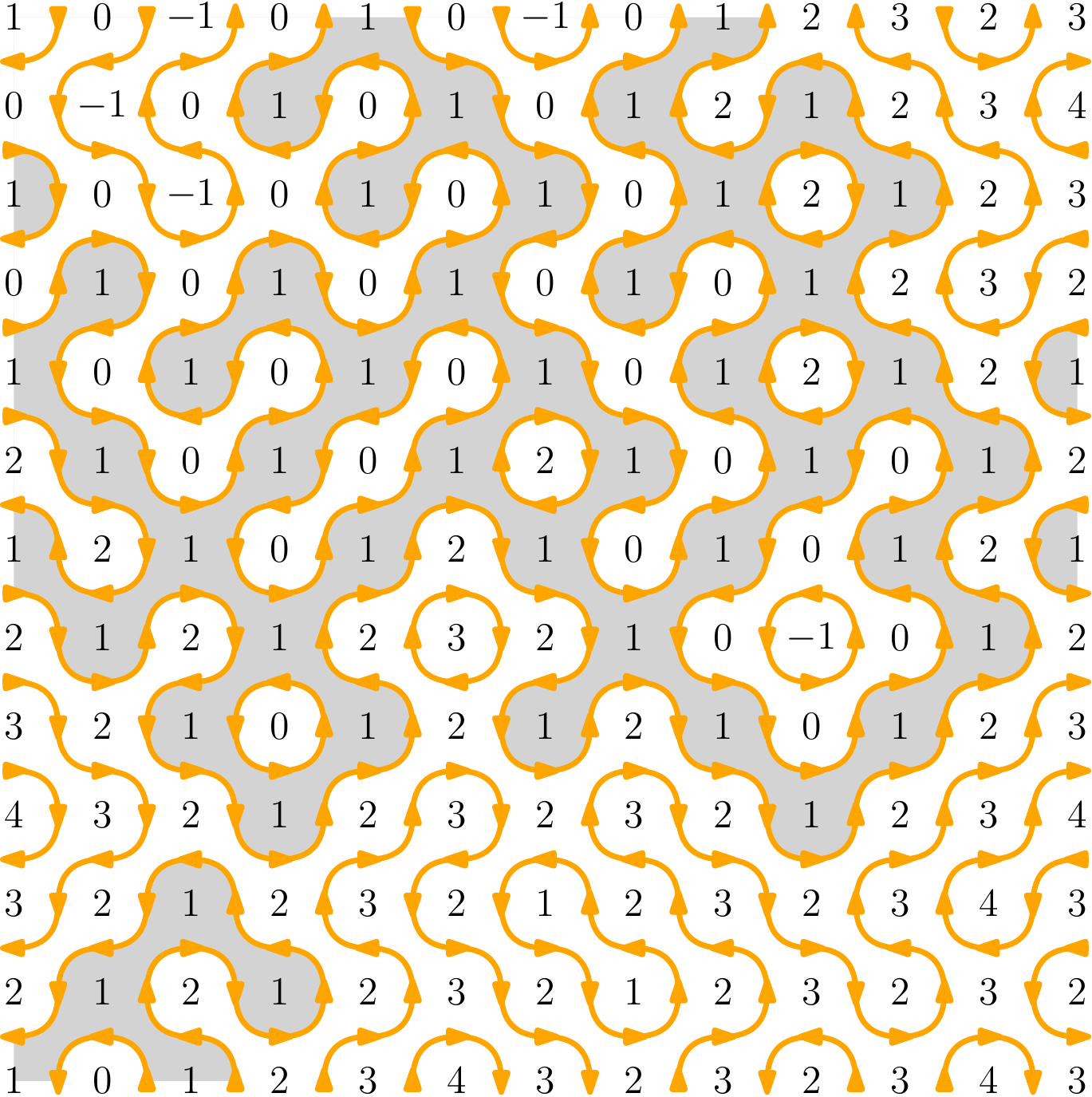}
\caption{\textit{Left:} A random-cluster configuration (in blue), its dual configuration (in red), and the loops (in orange) formed by the interfaces between primal and dual clusters. \textit{Right:} An oriented loop configuration and its associated height function. A height cluster is highlighted in gray.}\label{fig:fk-loops-height}
\end{figure}

The height function $h$ allows us to define maximal connected subsets with a given height. Call such a subset a \emph{height cluster}. Note that any such height cluster is either contained in $\L$ or in $\L^*$. In the former case, we call it a primal cluster and in the latter a dual cluster.
For a height cluster $\cA$ (or a subset of one), let $h(\cA)$ be its height.

We now define a partial order on finite connected subgraphs of $\mathbb L$ and $\mathbb L^*$ (later these will be height clusters or clusters in $\omega$) as follows.
Let $\cC \subset \L$ and $\cC' \subset \L^*$ be two such subgraphs.
We first note the following standard topological fact of planar duality (see~\cite[Proposition 11.2]{grimmett_perc}): there exists a unique cycle $\gamma(\cC)$ in $\L^*$ which surrounds $\cC$ and the dual of whose edges are contained in the edge-boundary of $\cC$ (edges which are not in $\cC$, but are incident to a vertex of $\cC$).
We write $\cC \prec \cC'$ if $\gamma(\cC) \subset \cC'$. We similarly define $\cC' \prec \cC$ if $\gamma(\cC') \subset \cC$.

Let us make a simple observation. Suppose that we are given two configurations $\tau \in \{0,1\}^{E(\L)}$ and $\tau' \in \{0,1\}^{E(\L^*)}$, both of which have only finite clusters, and such that either $e \in \tau$ or $e^* \in \tau'$ for every $e \in E(\L)$.
Let $\cC$ be a cluster in $\tau$ (resp. $\tau'$). Then there exists a unique cluster $\cC'$ in $\tau'$ (resp. $\tau$) such that $\cC \prec \cC'$. This follows by noting that $\gamma(\cC)$ and $\tau$ are disjoint, and thus, $\gamma(\cC)$ must be contained in $\tau'$.

Note that when all height clusters are finite, for every height cluster $\cA$ there is a unique height cluster $\cA'$ such that $\cA \prec \cA'$. Indeed, this is easily seen by considering the configurations $\tau = \{ \{u,v\} \in E(\L) : h(u)=h(v) \}$ and $\tau' = \{ \{u,v\} \in E(\L^*) : h(u)=h(v) \}$ and using the observation above (noting that for every edge $e$, either the two endpoints of $e$ have the same height, or the two endpoints of $e^*$ have the same height, so that $\tau$ and $\tau'$ indeed satisfy the required assumption).

Suppose that all height clusters are finite.
On this event, we can uniquely define a nested sequence \[ o \in \cA_0 \prec \cA_1 \prec \cA_2 \prec \dots \]
of alternating primal/dual height clusters.
Define the events
\begin{align*}
\cD^+ &:= \{ \text{all height clusters are finite and } h(\cA_n) \to +\infty\text{ as }n \to \infty \} ,\\
\cD^- &:= \{ \text{all height clusters are finite and } h(\cA_n) \to -\infty\text{ as }n \to \infty \} .
\end{align*}
We now show that $\mu_{q,\lambda}(\cD^+)=1$ for $\lambda>0$ and $\mu_{q,\lambda}(\cD^-)=1$ for $\lambda<0$. Since $\mu_{q,\lambda}=\mu_{q,-\lambda}$ for some $\lambda>0$ by \cref{thm:c}, this will result in a contradiction, thereby completing the proof.

Recall that $\omega$ is sampled from the unique critical random-cluster measure. Since, by assumption, all clusters in $\omega$ and $\omega^*$ are almost surely finite, we can uniquely define a nested sequence $o \in \cC_0 \prec \cC_1 \prec \dots$ of alternating primal and dual clusters in $\omega$ and $\omega^*$, and a sequence $L_1,L_2,\dots$ of loops such that $L_n$ is the interface between $\cC_{n-1}$ and $\cC_n$ (note that this loop is well-defined since $\gamma(\cC_{n-1}) \subset \cC_n$).
Note that the height on each cluster is constant (in other words, each $\cC_n$ is contained in some height cluster) so that $h(\cC_n)$ is well-defined.
Note also that $h(\cC_n)-h(\cC_{n-1}) = \xi_{L_n}$ for all $n \ge 1$ and that the loops $\{L_n\}_n$ are all distinct. Furthermore, the sequence $(L_n)_{n \ge 1}$ is measurable with respect to $\omega$.
Therefore, by definition of $\vec\omega$, we have that $\{\xi_{L_n}\}_{n \ge 1}$ are i.i.d.\ with $\P(L_1= \pm 1)=e^{\pm \lambda}/(e^{\lambda}+e^{-\lambda})$. Thus, $(h(\cC_n))_{n \ge 0}$ is a simple random walk with a positive (negative) drift when $\lambda$ is positive (negative). In particular, almost surely, $h(\cC_n) \to \infty$ if $\lambda>0$, and $h(\cC_n) \to -\infty$ if $\lambda<0$.

We now show that $\{ h(\cC_n) \to \infty\} \subset \cD^+$ and $\{ h(\cC_n) \to -\infty\} \subset \cD^-$, which will complete the proof.
We only show the former as the latter is analogous.
Suppose that $h(\cC_n) \to \infty$.
Note that all height clusters must be finite, since an infinite height cluster would necessarily intersect infinitely many $\cC_n$ (as these all surround $o$), and would therefore contradict the fact that $h(\cC_n) \to \infty$.
It remains to show that $h(\cA_n) \to \infty$.
To this end, it suffices to show that each $\cA_n$ intersects some $\cC_m$ (note that $m$ necessarily tends to infinity as $n \to \infty$).
For this, it suffices to show that $\bigcup_{k \ge 0} \cC_k$ contains an infinite simple path $\Gamma$ from $o$ in $\L \cup \L^*$ in the sense that consecutive vertices along the path are either adjacent in $\L$ or $\L^*$ or form an edge of $(\Z^2)^*$.
Indeed, this suffices because each $\cA_n$ surrounds $o$ (specifically, $\gamma(\cA_{n-1})$ is a cycle surrounding $o$) and therefore some vertex of $\cA_n$ must be at distance at most 1 from $\Gamma$.
To see that such a path $\Gamma$ exists, note that each $\cC_k$ is connected in $\L$ or $\L^*$, and $\cC_k$ can be connected to $\gamma(\cC_k) \subset \cC_{k+1}$ by an edge of $(\Z^2)^*$.
\qed

\section{The BKW coupling and proof of Theorem~\ref{thm:c}}
\label{sec:BKW-and-proof}

In \cref{sec:fk}, we introduce a modified random-cluster model with different weight for boundary clusters, and in \cref{sec:six}, we introduce the six-vertex model.
We then introduce the BKW coupling in \cref{sec:BKW}.
Finally, in \cref{sec:thm:c}, we give the proof of \cref{thm:c}.

\subsection{The random-cluster model}
\label{sec:fk}
The random-cluster model was introduced by Fortuin and Kasteleyn in 1970 \cite{F71,FK72} and is one of the most fundamental models in statistical physics. For background, we refer the reader to the excellent monographs due to Grimmett \cite{grimmett2006random} and Duminil--Copin \cite{duminil2017lectures}.

We consider here a modified version of the random-cluster model in which boundary clusters are assigned a different weight. This modification was introduced by Glazman and Peled~\cite{glazmanpeled2019} and will be useful for the BKW coupling in the following section. However, we quickly mention here that the use of this weighted model is not essential to our proof of \cref{thm:c} (see \cref{Rmk:torus}).

Recall our convention that the random-cluster model lives on the rotated square lattice $\L$.
The boundary of a set $\Lambda \subset \L$ is the subset of vertices in $\Lambda$ which are incident to some vertex outside $\Lambda$.
The \emph{weighted random-cluster measure} in a domain $\Lambda$ is a probability measure $\P^{\text{FK}}_{\Lambda,p,q,q_b}$ on $\{0,1\}^{E(\Lambda)}$ defined by
\begin{equation}
\P^{\text{FK}}_{\Lambda,p,q,q_b} (\omega) := \frac1{Z^{\text{FK}}_{\Lambda,p,q,q_b}}p^{o(\omega)}(1-p)^{c(\omega)} q^{k_i(\omega)} q_b^{k_b(\omega)}\label{eq:FK_weighted} ,
\end{equation}
where $o(\omega)$ is the number of open edges in $\omega$, $c(\omega)$ is the number of closed edges in $\omega$, $k_i(\omega)$ is the number of vertex clusters in $\omega$ which do not intersect the boundary, $k_b(\omega)$ is the number of vertex clusters in $\omega$ which intersect the boundary, and
$Z^{\text{FK}}_{\Lambda,p,q,q_b}$ is the partition function.

When $q_b = q$ we call this measure the free random-cluster measure and when $q_b = 1$ the wired random-cluster measure. These coincide with the standard definitions of the free and wired measures in the usual random-cluster model. We denote the free and wired measures by $\P^{\text{f}}_{\Lambda, p,q}$ and $\P^{\text{w}}_{\Lambda, p,q}$, respectively.

It is well-known that the random-cluster model satisfies certain monotonicity properties, namely, that the free measure increases and the wired measure decreases (in the sense of stochastic domination) as the domain $\Lambda$ increases. This allows us to take weak limits of $\P^{\text{w}}_{\Lambda, p,q}$ and $\P^{\text{f}}_{\Lambda, p,q}$ as $\Lambda \uparrow \L$, and we call the limiting measures $\P^{\text{w}}_{p,q}$ and $\P^{\text{f}}_{p,q}$, respectively.

The following monotonicity property is a special case of \cite[Corollary~5]{glazmanpeled2019}:
\begin{lemma}\label{lem:q_b}
For any $1 \le q_b \le q_b' \le q$, $\P^{\text{FK}}_{\Lambda,p,q,q_b}$ is stochastically dominated by $\P^{\text{FK}}_{\Lambda,p,q,q_b'}$.
\end{lemma}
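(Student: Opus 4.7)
The plan is to invoke Holley's criterion for stochastic domination: for two strictly positive probability measures $\mu,\nu$ on $\{0,1\}^{E(\Lambda)}$, one has $\mu\preceq\nu$ provided that, for every edge $e$ and every pair of configurations $\xi\le\xi'$ on $E(\Lambda)\setminus\{e\}$,
\[
\mu(\omega_e=1\mid\omega_{-e}=\xi)\le\nu(\omega_e=1\mid\omega_{-e}=\xi').
\]
Applied with $\mu=\P^{\text{FK}}_{\Lambda,p,q,q_b}$ and $\nu=\P^{\text{FK}}_{\Lambda,p,q,q_b'}$, this reduces the lemma to a scalar comparison of single-edge conditional probabilities.

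A direct calculation from \eqref{eq:FK_weighted} gives
\[
\P^{\text{FK}}_{\Lambda,p,q,q_b}(\omega_e=1\mid\omega_{-e}=\xi)=\frac{p}{p+(1-p)\,R(\xi,e,q,q_b)},
\]
where $R(\xi,e,q,q_b)$ is the multiplicative change in the cluster-weight factor $q^{k_i}q_b^{k_b}$ upon closing the edge $e$. Closing $e$ either leaves the cluster structure unchanged (if the endpoints of $e$ remain connected through the other edges of $\xi$), giving $R=1$, or splits one cluster into two. In the latter case, either (A) at most one of the two resulting pieces touches the boundary, so a new interior cluster is created and $R=q$, or (B) both resulting pieces touch the boundary, so a new boundary cluster is created and $R=q_b$.

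The crucial structural input is that passing from $\xi$ to $\xi'$ can only merge clusters: every cluster of $\xi$ lies inside the cluster of $\xi'$ containing it, so endpoints connected in $\xi$ are connected in $\xi'$, and any cluster of $\xi$ touching the boundary lies inside a cluster of $\xi'$ that still touches the boundary. This constrains the admissible ``type transitions'' of $R$ between $\xi$ and $\xi'$, leaving only a handful of subcases to check.

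The verification then proceeds by a finite case analysis. The cases where the endpoints become connected in $\xi'$ are immediate, since the right-hand side equals $p$ while the left-hand side lies in $[0,p]$. In all remaining subcases both sides have the form $p/[p+(1-p)r]$, a function that is monotone in $r$, and the desired inequality reduces to a comparison between two entries of $\{q,q_b,q_b'\}$. Each such scalar comparison follows from the hypothesis $1\le q_b\le q_b'\le q$ applied in the appropriate slot; in particular, the bound $q_b'\le q$ is precisely what handles the transition in which an interior cluster of $\xi$ has grown to touch the boundary in $\xi'$. I expect the main obstacle to be nothing more than careful bookkeeping of these cluster-type transitions; once enumerated, every resulting inequality is routine.
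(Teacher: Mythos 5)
Your approach---Holley's single-edge criterion applied to the explicit conditional edge probabilities---is exactly the one the paper takes, and your derivation of the conditional probabilities (with $R\in\{1,q,q_b\}$ according to the three cases) is correct. However, your claim that every resulting scalar comparison is routine under $1\le q_b\le q_b'\le q$ glosses over a subcase that goes the \emph{wrong} way. Consider the transition where, in $\xi$, both endpoints of $e$ are already connected to $\partial\Lambda$ but not to each other; since $\xi\le\xi'$, the same holds in $\xi'$, so $R(\xi,e,q,q_b)=q_b$ and $R(\xi',e,q,q_b')=q_b'$, and the Holley inequality you need reduces to $q_b\ge q_b'$, the reverse of the hypothesis. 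This is not a bookkeeping slip but a sign problem: plugging $q_b=1$ (wired) and $q_b'=q$ (free) into the lemma as stated would yield that the wired measure is dominated by the free measure, contradicting the standard ordering $\P^{\text{f}}_{\Lambda,p,q}\preceq\P^{\text{w}}_{\Lambda,p,q}$. Your own computation in fact shows that $\P^{\text{FK}}_{\Lambda,p,q,q_b}$ is stochastically \emph{decreasing} in $q_b$, and the lemma's statement appears to have the domination direction reversed (a harmless misprint for the paper's purposes, since the application in \cref{sec:thm:c} only uses that $\P^{\text{FK}}_{\Lambda,p_c,q,q_b}$ is sandwiched between the $q_b=1$ and $q_b=q$ measures). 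With the direction corrected---say, proving $\P^{\text{FK}}_{\Lambda,p,q,q_b'}\preceq\P^{\text{FK}}_{\Lambda,p,q,q_b}$---your case analysis closes: the boundary-to-boundary subcase is handled by $q_b\le q_b'$, and the interior-to-boundary transition you singled out is handled by $q_b\le q$.
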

\begin{proof}
Upon noting that, for any $e=\{u,v\} \in E(\Lambda)$ and $\tau \in \{0,1\}^{\Lambda \setminus \{e\}}$,
\[ \P^{\text{FK}}_{\Lambda,p,q,q_b}(\omega_e = 1 \mid \omega_{\Lambda \setminus \{e\}} = \tau) = \begin{cases}
 p &\text{if } u \overset{\tau}{\leftrightarrow} v \\
 \frac{p}{p+q_b(1-p)} &\text{if } u \overset{\tau}{\not\leftrightarrow} v,~ u \overset{\tau}\leftrightarrow \partial \Lambda,~ v \overset{\tau}{\leftrightarrow} \partial \Lambda \\
 \frac{p}{p+q(1-p)} &\text{otherwise}
\end{cases} ,\]
the lemma follows by an application of Holley's criterion.
\end{proof}

\subsection{The six-vertex model}\label{sec:six}

The six-vertex model is a model of arrow configurations on the edges of $\Z^2$ satisfying the \emph{ice rule}: at each vertex there are exactly two outgoing and two incoming arrows. This gives rise to one of six configurations (called \emph{types}) at each vertex, as depicted in \cref{fig:spin}.

Although this model is most natural with periodic boundary conditions, we define it in a finite domain with boundary, keeping in mind the parity of the boundary faces.
Recall that a vertex $v \in \L$ corresponds to a face of $\Z^2$. For a subset $\Lambda \subset \L$, let $\hat\Lambda$ denote the union of the four edges of this face over all $v \in \Lambda$.
An \emph{even domain} is a finite subset $\Lambda \subset \L$ such that the boundary of $\hat\Lambda$ consists of a simple cycle alternating between vertical and horizontal edges.
Note that a face of $\Z^2$ which is contained in $\hat\Lambda$ corresponds to some vertex in $\Lambda$.

Fix an even domain $\Lambda \subset \L$. Say that a vertex of $\hat\Lambda$ is \emph{internal} if all four of its incident edges lie in $\hat\Lambda$.
Let $\Omega^{6v, \cal}_{\Lambda}$ be the collection of all arrow configurations on the edges of $\hat\Lambda$ satisfying the ice rule at internal vertices, and where the arrows on the boundary edges are such that the boundary of $\hat\Lambda$ forms a directed cycle having anticlockwise orientation; see \cref{fig:oriented}.
Consider the probability measure on arrow configurations on the edges of $\hat\Lambda$ defined by
\begin{equation}
\P^{6v,\cal}_{\Lambda,c}(\omega_{6v}) = \frac1{Z^{6v,\cal}_{\Lambda,c}} \cdot c^{\# \{\text{type 5 or 6 vertices}\}} \cdot \1_{\Omega^{6v,\cal}_{\Lambda}}(\omega_{6v}), \label{eq:6v}
\end{equation}
where $Z^{6v,\cal}_{\Lambda,c}$ is the partition function.

Let us mention that this model is of wide independent interest. Following, Lieb~\cite{lieb1967exact,lieb2004exact,lleb2004residual}
and Yang--Yang~\cite{yang1966one} who
found an expression for the free
energy using the Bethe ansatz, the model is expected to exhibit different behavior when $0<c \le 2$ (the disordered phase) and when $c>2$ (the antiferroelectric
phase). A longstanding open question is to establish that the large scale behavior of the associated height function is close to a scalar multiple of the Gaussian free field for $c\in (0,2]$. We refer the interested reader to~\cite[Chapter~8]{baxter2016exactly} for more information about this model.

\subsection{The Baxter--Kelland--Wu coupling}\label{sec:BKW}

Temperley and Lieb~\cite{temperley1971relations} established a relation between the planar random-cluster model and the six-vertex model at the level of their partitions functions. Baxter, Kelland and Wu~\cite{BKW76} later gave a geometric description of this relation, which leads to a probabilistic coupling between the two models with parameters $q\ge 4$ and $c \ge 2$ (the relation also holds when $c<2$, but does not yield a probabilistic coupling). We describe a version of this coupling which closely follows~\cite{glazmanpeled2019}.

A useful way to represent the random-cluster model is by a nested loop configuration lying in $\Z^2$ (which is the medial lattice of $\L$). Take an even domain $\Lambda$ (recall the definitions of even and odd domains from \cref{sec:six}). For every random-cluster configuration in $\Lambda$, define its dual configuration to be $\omega_{e^*} = 1-\omega_e$, where $e^*$ is the dual edge of $e$. This defines interfaces between primal and dual clusters which form nested loops as illustrated in \cref{fig:oriented}. 

\begin{figure}
\centering
\includegraphics[scale = 0.9]{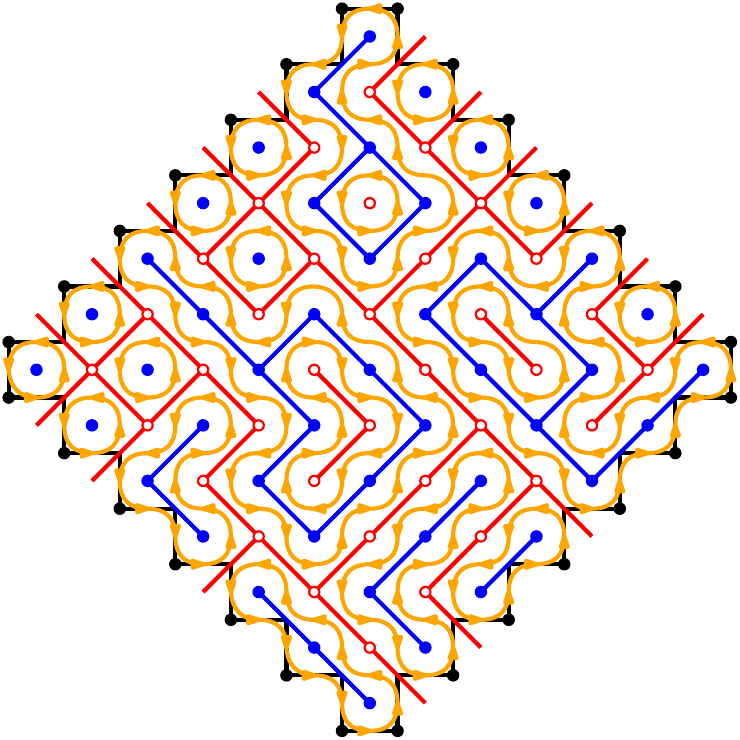}\qquad
\includegraphics[scale = 0.9]{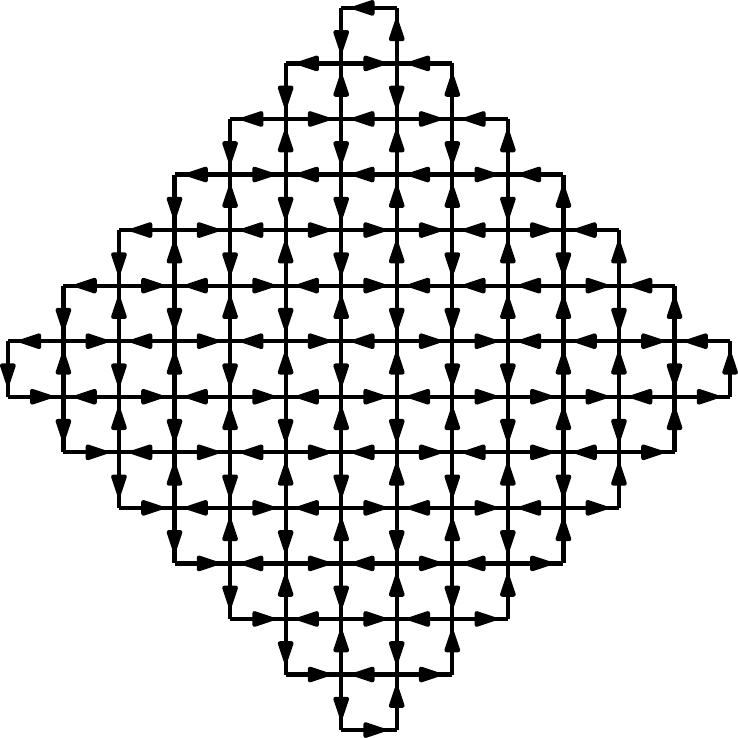}
\caption{\textit{Left:} The oriented random-cluster configuration with boundary loop given anticlockwise orientation. The orange loops are the interfaces between the primal (in blue) and dual (in red) clusters. Non-interior vertices are depicted by black bullets. \textit{Right:} The associated six-vertex configuration.}\label{fig:oriented}
\end{figure}

This loop representation allows us define the \emph{oriented random-cluster model} as follows. First sample a random-cluster configuration $\omega$ from $\P^{\text{FK}}_{\Lambda,p,q,q_b}$, and consider the resulting loop configuration  as above. We now fix $\lambda \in \R$ and orient the loops as follows.
Every loop containing a boundary edge is deterministically oriented in the anticlockwise direction. Every other loop is independently oriented in the clockwise or anticlockwise direction with probabilities $e^{-\lambda}/(e^\lambda+e^{-\lambda})$ and $e^{\lambda}/(e^\lambda+e^{-\lambda})$, respectively. 
This produces a random oriented loop configuration, whose law we denote by $\P^{\text{FK},\cal,\lambda}_{\Lambda,p,q,q_b}$.

We now describe an operation which will induce a measure preserving map between $\P^{6v,\cal}_{\Lambda,c}$ and $\P^{\text{FK},\cal,\lambda}_{\Lambda,p_c,q,q_b}$ (for well-chosen values of $c$, $q$, $q_b$ and $\lambda$). Given a sample $\omega_{6v}$ from $\P^{6v,\cal}_{\Lambda,c}$, for each internal vertex, we split the two incoming and two outgoing arrows into two non-crossing loop segments (each containing one incoming and one outgoing arrow) according to the rules shown in \cref{fig:split}. Note that a vertex of type 1 to 4 is split deterministically. On the other hand, a vertex of type 5 or 6 is split randomly in one of two possible ways with probabilities $e^{-\lambda/2}/(e^{-\lambda/2} + e^{\lambda/2})$ and $e^{\lambda/2}/(e^{-\lambda/2} + e^{\lambda/2})$, with the higher probability assigned to the case resulting in loop segments with anticlockwise orientations. These splits are carried out independently for each vertex. Call this (random) operation $\mathsf{Split}(\lambda)$.

\begin{figure}
\centering
\includegraphics[scale = 1.5]{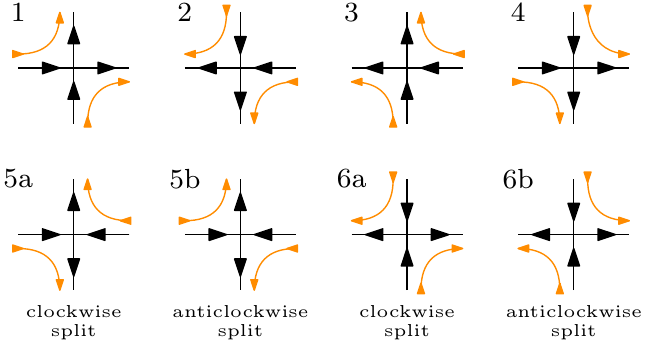}
\caption{The operation $\mathsf{Split}(\lambda)$. Types 1 to 4 can be split into two loop segments in a unique way. Types 5 and 6 can be split in two different ways and the choice of how to split is made randomly with the clockwise split having probability $e^{-\lambda/2}/(e^{\lambda/2} + e^{-\lambda/2})$ and the anticlockwise split having probability $e^{\lambda/2}/(e^{\lambda/2} + e^{-\lambda/2})$.}
\label{fig:split}
\end{figure}

\begin{proposition}\label{prop:split}
Suppose that
\begin{equation}
c = e^{-\lambda/2} + e^{\lambda/2}, \qquad \sqrt{q} = e^{-\lambda} + e^{\lambda}, \qquad p=p_c(q) =\frac{\sqrt{q}}{1+\sqrt{q}}.\label{eq:cq}
\end{equation}
Let $\Lambda$ be an even domain.
Then $\mathsf{Split}(\lambda)$ induces a measure preserving map from $\P^{6v,\cal}_{\Lambda,c}$ to $\P^{\text{FK},\cal,\lambda}_{\Lambda,p_c,q,e^{\lambda}\sqrt{q}}$.
\end{proposition}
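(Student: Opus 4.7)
The plan is to check that the pushforward of $\P^{6v,\cal}_{\Lambda,c}$ under $\mathsf{Split}(\lambda)$ and the target measure $\P^{\text{FK},\cal,\lambda}_{\Lambda,p_c,q,e^\lambda\sqrt{q}}$ assign the same weight to every oriented loop configuration $\vec\omega$ on the medial graph of $\Lambda$. A first observation is that any $\vec\omega$ compatible with the boundary (all boundary loops anticlockwise) uniquely reconstructs both the underlying six-vertex configuration $\omega_{6v}$ (by reading the arrows off the edges) and the choice of split at each internal type~5 or~6 vertex (by reading off which pair of arrows belong to the same loop segment). Consequently, the pushforward weight of $\vec\omega$ equals
\[
\frac{1}{Z^{6v,\cal}_{\Lambda,c}} \prod_v W(v),
\]
where $W(v)=1$ at internal vertices of types~1--4 and $W(v)=e^{\pm\lambda/2}$ at internal type~5 and~6 vertices, with sign according to whether the split there is anticlockwise or clockwise.

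The heart of the proof is the Baxter--Kelland--Wu identity $\prod_v W(v) = \prod_\ell e^{\lambda s(\ell)}$, where the product on the right is over all loops of $\vec\omega$ and $s(\ell)\in\{+1,-1\}$ records whether $\ell$ is anticlockwise or clockwise. I would prove it by redistributing each $W(v)$ among the two loop segments passing through the vertex: assign to each segment the factor $e^{\lambda/4}$ for every left quarter-turn and $e^{-\lambda/4}$ for every right quarter-turn. A case-check over the six types shows that these segment factors multiply at each vertex to exactly $W(v)$: arrow-direction constraints force the (unique) split at types~1--4 to consist of two oppositely-turning arcs (one left, one right, so the product is $1$), while at types~5 and~6 both arcs of the chosen split necessarily turn in the same direction (giving $e^{\pm\lambda/2}$). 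Since each closed loop has total turning $\pm 2\pi=\pm 4$ quarter-turns, multiplying the segment factors around a loop yields $e^{\pm\lambda}$, as claimed.

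To conclude, separate boundary loops (forced anticlockwise, each contributing $e^\lambda$) from interior loops. Using $p_c/(1-p_c)=\sqrt{q}$, the target measure assigns to $\vec\omega$, up to the configuration-independent constant $(1-p_c)^{|E(\Lambda)|}$ and the partition function, the weight $\sqrt{q}^{\,o(\omega)} q^{k_i(\omega)}(e^\lambda\sqrt{q})^{k_b(\omega)} \prod_{\ell\text{ int.}} \bigl(e^{\lambda s(\ell)}/\sqrt{q}\bigr)$. A standard planar Euler-type identity on the even domain $\Lambda$ (and in particular the bookkeeping relation $\ell_b = k_b$) converts the combination $\sqrt{q}^{\,o(\omega)} q^{k_i(\omega)}\sqrt{q}^{\,k_b(\omega)}$ into $\sqrt{q}^{\,\ell_i+\ell_b}$ times a global constant; the $\sqrt{q}^{\,\ell_i}$ cancels the $1/\sqrt{q}$ per interior loop, and the extra $e^\lambda$ per boundary cluster absorbs exactly the $e^\lambda$ per boundary loop coming from the BKW identity. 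Both measures being probability measures, this proportionality of weights forces equality, and the proposition follows. The main obstacle is precisely this Euler-type identity: the careful combinatorial accounting of open edges, primal/dual clusters (interior vs.\ boundary), and medial loops on an even domain with a cyclic boundary, which is what singles out $q_b = e^\lambda\sqrt{q}$ as the unique boundary weight making the coupling work.
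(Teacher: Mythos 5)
Your proof follows essentially the same route as the paper's: express the pushforward weight $\mathsf{Split}(\lambda)*\P^{6v,\cal}_{\Lambda,c}$ as a product of per-vertex split factors, use the Baxter--Kelland--Wu winding argument (here phrased as redistributing $e^{\pm\lambda/4}$ per quarter-turn to each arc) to turn that product into $\prod_\ell e^{\lambda s(\ell)}$, and on the random-cluster side combine the Euler identity with $\sqrt{q}=e^\lambda+e^{-\lambda}$, $k_b=\ell_b$ and the forced anticlockwise boundary loops to reduce the oriented FK weight to the same per-loop product, so that equality of probability measures follows from proportionality. Two small bookkeeping slips worth flagging, neither of which damages the structure: (i) the Euler identity gives $\sqrt{q}^{\,o(\omega)}q^{k_i(\omega)}\sqrt{q}^{\,k_b(\omega)}=\sqrt{q}^{\,\ell_i(\omega)}\cdot\text{const}$, not $\sqrt{q}^{\,\ell_i+\ell_b}\cdot\text{const}$ as you wrote (your subsequent cancellation description actually matches the correct exponent, so this reads as a typo); (ii) the identity $\prod_v W(v)=\prod_\ell e^{\lambda s(\ell)}$ as you state it is off by the configuration-independent factor $e^{\lambda n_2(\Lambda)/4}$, since the right-hand side accrues an extra quarter-turn factor at every non-internal vertex of $\hat\Lambda$ while your left-hand product ranges only over internal vertices; the paper records this explicitly as the $\tfrac{\pi}{2}n_2(\Lambda)$ term in its winding identity. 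Since both are global constants, your closing ``both sides are probability measures'' step already absorbs them.
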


Note that the existence of a solution to~\eqref{eq:cq} implies that $c \ge 2$, $q \ge 4$ and $c=\sqrt{2+\sqrt{q}}$. Moreover, given such $c$ and $q$, this equation uniquely defines $\lambda$ up to its sign.
That is, if $(c,q,\lambda)$ satisfies~\eqref{eq:cq} then so does $(c,q,-\lambda)$. Note also that $\mathsf{Split}^{-1}$ can be defined as the deterministic map that projects an oriented loop configuration onto its underlying six-vertex configuration. Thus, since the six-vertex measure $\P^{6v,\cal}_{\Lambda,c}$ does not depend explicitly on $\lambda$, the following is an immediate corollary to \cref{prop:split}.

\begin{proposition}\label{prop:split2}
Suppose that~\eqref{eq:cq} holds.
Let $\Lambda$ be an even domain. Then
\[ \mathsf{Split}^{-1} * \P^{\text{FK},\cal,\lambda}_{\Lambda,p_c,q,e^{\lambda}\sqrt{q}} = \P^{6v,\cal}_{\Lambda,c} = \mathsf{Split}^{-1} * \P^{\text{FK},\cal,-\lambda}_{\Lambda,p_c,q,e^{-\lambda}\sqrt{q}} .\]
\end{proposition}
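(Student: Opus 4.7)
The plan is to derive this directly from Proposition \ref{prop:split} by using the fact that the relation \eqref{eq:cq} is invariant under $\lambda \mapsto -\lambda$ and that $\mathsf{Split}^{-1}$ is a deterministic operation.

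First I would observe that if $(c,q,\lambda)$ solves \eqref{eq:cq}, then so does $(c,q,-\lambda)$: the right-hand sides $e^{-\lambda/2}+e^{\lambda/2}$ and $e^{-\lambda}+e^\lambda$ are manifestly symmetric in $\lambda$, and $p_c$ depends only on $q$. Consequently, Proposition \ref{prop:split} can be invoked with either sign. Applied with $+\lambda$, it yields that $\mathsf{Split}(\lambda)$ pushes $\P^{6v,\cal}_{\Lambda,c}$ forward to $\P^{\text{FK},\cal,\lambda}_{\Lambda,p_c,q,e^\lambda\sqrt q}$; applied with $-\lambda$, it yields that $\mathsf{Split}(-\lambda)$ pushes the same six-vertex measure forward to $\P^{\text{FK},\cal,-\lambda}_{\Lambda,p_c,q,e^{-\lambda}\sqrt q}$.

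Next I would invert the map. Since $\mathsf{Split}^{-1}$ is defined as the deterministic map that forgets the non-crossing loop splitting at each vertex and retains only the arrows on the edges, it is a function (not a random operation) on oriented loop configurations, and it recovers the underlying six-vertex configuration from any element of $\mathrm{supp}(\P^{\text{FK},\cal,\pm\lambda}_{\Lambda,p_c,q,e^{\pm\lambda}\sqrt q})$. In particular, $\mathsf{Split}^{-1} \circ \mathsf{Split}(\pm\lambda)$ is the identity on six-vertex configurations. Pushing both sides of each Proposition \ref{prop:split} identity through $\mathsf{Split}^{-1}$ therefore gives
\[ \mathsf{Split}^{-1} * \P^{\text{FK},\cal,\lambda}_{\Lambda,p_c,q,e^{\lambda}\sqrt{q}} \;=\; \P^{6v,\cal}_{\Lambda,c} \;=\; \mathsf{Split}^{-1} * \P^{\text{FK},\cal,-\lambda}_{\Lambda,p_c,q,e^{-\lambda}\sqrt{q}}, \]
which is exactly the claim. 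No estimate or limiting argument is needed; the only substantive content is the observation that the six-vertex measure $\P^{6v,\cal}_{\Lambda,c}$ is the same on both ends because its definition \eqref{eq:6v} involves $c$ and the type-$5/6$ count but not $\lambda$ directly, and $c$ is determined by $q$ through \eqref{eq:cq} independently of $\mathrm{sign}(\lambda)$. There is no real obstacle here — the whole content of the corollary has already been done in Proposition \ref{prop:split}; this statement merely records the symmetry in a form convenient for the proof of Theorem \ref{thm:c}, where it is applied to the two oriented random-cluster measures with opposite tilts to certify that they project to a common six-vertex law.
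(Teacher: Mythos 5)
Your proof is correct and takes essentially the same route as the paper: observe the $\lambda \mapsto -\lambda$ symmetry of \eqref{eq:cq}, note that $\mathsf{Split}^{-1}$ is the deterministic projection onto the underlying six-vertex configuration (so $\mathsf{Split}^{-1}\circ\mathsf{Split}(\pm\lambda)$ is the identity), and invoke \cref{prop:split} for both signs. The paper presents exactly this reasoning, framing the proposition as an immediate corollary.
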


\begin{proof}[Proof of \cref{prop:split}]
We first make some topological observations. It is straightforward that the operation $\mathsf{Split}(\lambda)$ creates an oriented loop configuration in $\hat\Lambda$, and because of the imposed boundary conditions, the loops intersecting the boundary are always oriented in the anticlockwise direction. The key idea due to Baxter, Kelland and Wu \cite{BKW76} is to measure the total angle turned by both loop segments at each vertex after applying $\mathsf{Split}(\lambda)$. Note that in types $1$ to~$4$, the total angle turned is 0: the two turns cancel each other out. However, in types 5 or 6, the total angle turned is either $\pi$ or $-\pi$, depending on how the arrows are split. Call the former an \emph{anticlockwise split} and the latter a \emph{clockwise split}. Note that the total winding of each loop is, on the one hand, the sum of angles it turns at each of its vertices, and, on the other hand, $\pm 2\pi$ according to its orientation. This leads to the following identity:
\begin{multline}
\pi (\# \text{(anticlockwise split)} - \# \text{(clockwise split)}  ) + \tfrac\pi 2 \cdot n_2(\Lambda) \\
= 2\pi (\# \text{(anticlockwise loops)} - \# \text{(clockwise loops)}),\label{eq:winding_identity}
\end{multline}
where $n_2(\Lambda)$ is the number of non-internal vertices in $\hat\Lambda$.
Here, $\# \text{(anticlockwise split)}$ and $\# \text{(clockwise split)}$ count splits in internal vertices, and $\# \text{(anticlockwise loops)}$ includes boundary loops in the count.

We now show that $\mathsf{Split}(\lambda)$ is measure preserving. Plugging in $p=p_c$ from \eqref{eq:cq} and $q_b=e^{\lambda}\sqrt{q}$ into \eqref{eq:FK_weighted}, we see that, for every $\omega \in \{0,1\}^{E(\Lambda)}$, 
\begin{equation}\label{eq:fk-weight}
\P^{\text{FK}}_{\Lambda,p_c,q,e^{\lambda }\sqrt {q}} (\omega) = \frac{1}{Z^{\text{FK}}_{\Lambda,p_c,q,e^{\lambda }\sqrt {q}}} \cdot \frac{(\sqrt{q})^{o(\omega)} q^{k_i(\omega)} (e^{\lambda}\sqrt{q})^{k_b(\omega)}  }{(1+\sqrt{q})^{|E(\Lambda)|}}.
\end{equation}
Let $\ell(\omega)$ be the number of loops in the loop configuration associated to $\omega$, let $\ell_i(\omega)$ the number of loops not containing a boundary edge of $\hat\Lambda$, and denote $\ell_b(\omega) = \ell(\omega) - \ell_i(\omega)$.
Let $k(\omega^*)$ be the number of clusters in the dual configuration $\omega^*$, where the outer face is thought of as a single vertex (so that clusters containing an dual boundary edge are identified and thus counted once).
We use the following identities:
\begin{equation}
k(\omega^*) + k(\omega) = \ell(\omega)+1, \quad k_b(\omega)=\ell_b(\omega) , \quad |V(\Lambda) |-o(\omega) + k(\omega^*) = k(\omega)+1. \label{eq:Euler}
\end{equation}
The first two equalities are easily verified.
The third equality is obtained by applying Euler's formula to $\omega$ and observing that the faces of $\omega$ (as a planar graph) are in correspondence with the dual clusters (recall that the cluster touching the outer face is wired).
It follows from~\eqref{eq:Euler} that $o(\omega)+2k_i(\omega)+k_b(\omega) = \ell_i(\omega) + |V(\Lambda)|$. Plugging this and $k_b(\omega)=\ell_b(\omega)$ back into \eqref{eq:fk-weight}, we see that
\begin{equation}
\P^{\text{FK}}_{\Lambda,p_c,q,e^{\lambda }\sqrt {q}} (\omega) = C \sqrt{q}^{\ell_i(\omega)} e^{\lambda \ell_b(\omega)} , \label{eq:unoriented_loop}
\end{equation}
where $$C = C(\Lambda) = \frac{1}{Z^{\text{FK}}_{\Lambda,p_c,q,e^{\lambda}\sqrt {q}}} \cdot  \frac{\sqrt{q}^{|V(\Lambda)|} }{(1+\sqrt{q})^{|E(\Lambda)|}}.$$
Thus, for any oriented loop configuration $\vec \omega$, we have that
\begin{equation}
\P^{\text{FK}, \cal}_{\Lambda,p_c,q,e^{\lambda }\sqrt {q}} (\vec \omega) = C e^{\lambda ( \# \text{(anticlockwise loops)} - \# \text{(clockwise loops)})}. \label{eq:oriented_FK}
\end{equation}
Recall that the boundary-touching loops always have anticlockwise orientation and we have used the second identity of \eqref{eq:Euler} to that end.

Finally, observe that the pushforward of $\P^{6v, \cal}_{\Lambda,c}$ by $\mathsf{Split}(\lambda)$ yields
\begin{align*}
\left(\mathsf{Split}(\lambda)*\P^{6v, \cal}_{\Lambda,c}\right)(\vec \omega)
 &= \P^{6v, \cal}_{\Lambda,c}(\omega_{6v}) \cdot \frac{e^{\frac\lambda 2 (\# \text{(anticlockwise split)} - \# \text{(clockwise split)})}}{c^{\# \{\text{type 5 or 6 vertices}\}}}\\
 &= \frac1{Z^{6v, \cal}_{\Lambda,c}} \cdot e^{\frac\lambda 2 (\# \text{(anticlockwise split)} - \# \text{(clockwise split)})},
\end{align*}
where $\omega_{6v} \in \Omega^{6v, \cal}_{\Lambda}$ is the unique six-vertex configuration which can lead to $\vec\omega$, and we count the number of splits when going from this $\omega_{6v}$ to $\vec\omega$.
Using \eqref{eq:winding_identity} and the fact that $n_2(\Lambda)$, $C$ and the partition functions depend only on $\Lambda$ and not on the configurations, we see that the above probability measure is the same as \eqref{eq:oriented_FK}.
 \end{proof}

\subsection{Proof of Theorem~\ref{thm:c}}
\label{sec:thm:c}

Fix $q>4$ and suppose that the random-cluster model undergoes a continuous phase transition in the sense that $\theta^{\text{w}}(p_c,q)=0$. It is standard~\cite[Theorem~5.33]{grimmett2006random} that this implies that the wired and free critical random-cluster measures coincide, i.e., $\P^{\text{w}}_{p_c,q}=\P^{\text{f}}_{p_c,q}$. Denote this measure by $\P^{\text{FK}}_{q}$.

Recall that $\P^{\text{FK}}_{\Lambda,p,q,1}$ and $\P^{\text{FK}}_{\Lambda,p,q,q}$ are the wired and free random-cluster measures in domain $\Lambda$ with parameters $p$ and $q$. Thus, by definition of $\P^{\text{w}}_{p,q}$ and $\P^{\text{f}}_{p,q}$, we have that $\P^{\text{FK}}_{\Lambda,p,q,1}$ and $\P^{\text{FK}}_{\Lambda,p,q,q}$ converge to $\P^{\text{w}}_{p,q}$ and $\P^{\text{f}}_{p,q}$, respectively (here and below, all limits are taken as $\Lambda$ increases to $\L$). In particular, $\P^{\text{FK}}_{\Lambda,p_c,q,1}$ and $\P^{\text{FK}}_{\Lambda,p_c,q,q}$ both converge to $\P^{\text{FK}}_{q}$.
Since \cref{lem:q_b} implies that, for any $1 \le q_b \le q$, $\P^{\text{FK}}_{\Lambda,p_c,q,q_b} $ is stochastically bounded between $\P^{\text{FK}}_{\Lambda,p_c,q,1}$ and $\P^{\text{FK}}_{\Lambda,p_c,q,q}$, it follows that $\P^{\text{FK}}_{\Lambda,p_c,q,q_b}$ converges to $\P^{\text{FK}}_{q}$ for any $1 \le q_b \le q$.

Let $\omega$ be sampled from $\P^{\text{FK}}_q$.
Consider the loops which form interfaces between primal and dual clusters in $\omega$.
Note that, almost surely, all such loops are finite.
Let $\P^{\text{FK},\cal,\lambda}_{q}$ denote the measure on oriented loops obtained by independently orienting each loop clockwise or anticlockwise with probabilities $e^{-\lambda}/(e^\lambda+e^{-\lambda})$ and $e^{\lambda}/(e^\lambda+e^{-\lambda})$, respectively.
Since all loops are finite and since $\P^{\text{FK}}_{\Lambda,p_c,q,q_b}$ converges to $\P^{\text{FK}}_{q}$ for any $1 \le q_b \le q$, it follows that $\P^{\text{FK},\cal,\lambda}_{\Lambda,p_c,q,q_b}$ converges to $\P^{\text{FK},\cal,\lambda}_{q}$ for any $1 \le q_b \le q$ and any $\lambda \in \R$.
Recall that $\mathsf{Split}^{-1}$ is the map that projects an oriented loop configuration onto its underlying six-vertex configuration, so that $\mathsf{Split}^{-1} * \P^{\text{FK},\cal,\lambda}_{q} = \mu_{q,\lambda}$ by definition.
In particular, we see that $\mathsf{Split}^{-1} * \P^{\text{FK},\cal,\lambda}_{\Lambda,p_c,q,e^{\lambda}\sqrt{q}}$ converges to $\mu_{q,\lambda}$ for any $\lambda$ such that $1 \le e^{\lambda} \sqrt{q} \le q$.

Now take $\lambda \neq 0$ to satisfy~\eqref{eq:cq}, and recall from \cref{prop:split2} that
\[ \mathsf{Split}^{-1} * \P^{\text{FK},\cal,\lambda}_{\Lambda,p_c,q,e^{\lambda}\sqrt{q}} = \P^{6v,\cal}_{\Lambda,c} = \mathsf{Split}^{-1} * \P^{\text{FK},\cal,-\lambda}_{\Lambda,p_c,q,e^{-\lambda}\sqrt{q}} .\]
Taking limits, we conclude that $\mu_{q,\lambda}=\mu_{q,-\lambda}$.
\qed

\begin{remark}\label{Rmk:torus}
We chose to work with the boundary-weighted random-cluster model defined in~\cref{sec:fk} as it allows for a simple comparison with the standard random-cluster model (namely, \cref{lem:q_b}). However, we feel that this choice is more of a convenience than a necessity. Indeed, we believe that the proof of \cref{thm:c} would work in a similar way if we were instead to consider the random-cluster model with periodic boundary conditions (i.e., on a torus). The primary difference would then be the presence of non-contractible loops, and one would need to show that these loops typically do not come near the origin.
\end{remark}

\bibliographystyle{amsplain}
%\nocite{*}
\bibliography{library}

\end{document}